\newcounter{mgncount}
\newtheorem*{thm-}{Theorem}
\newtheorem*{qu-}{Question}
\declaretheorem[name=Theorem,numberwithin=section]{thm}
\declaretheorem[name=Lemma,sibling=thm]{lemma}
\declaretheorem[name=Definition,style=definition,sibling=thm]{defn}
\numberwithin{equation}{section}
\newcommand{\OP}{\Lambda_p^{\phi}}
\newcommand{\bbR}{\mathbb{R}}
\newcommand{\bbS}{\mathbb{S}}
\newcommand{\la}{\lambda}
\newcommand{\si}{\sigma}
\newcommand{\De}{\Delta}
\newcommand{\cA}{\mathcal{A}}
\newcommand{\cC}{\mathcal{C}}
\newcommand{\cK}{\mathcal{K}}
\newcommand{\cM}{\mathcal{M}}
\renewcommand{\(}{\left(}
\renewcommand{\)}{\right)}
\newcommand{\eq}[1]{\begin{equation}\begin{alignedat}{2} #1 \end{alignedat}\end{equation}}
\newcommand{\q}{\quad}
\begin{document}
	\title[Capillary curvature images]
	{Capillary curvature images}
	\author[Y. Hu, M. N. Ivaki]{Yingxiang Hu, Mohammad N. Ivaki}

\begin{abstract}
In this paper, we solve the even capillary $L_p$-Minkowski problem for the range $-n < p < 1$ and $\theta \in (0,\frac{\pi}{2})$. Our approach is based on an iterative scheme that builds on the solution to the capillary Minkowski problem (i.e., the case $p = 1$) and leverages the monotonicity of a class of functionals under a family of capillary curvature image operators. These operators are constructed so that their fixed points, whenever they exist, correspond precisely to solutions of the capillary $L_p$-Minkowski problem.
\end{abstract}

\maketitle

\section{Introduction}
Let $(\mathbb{R}^{n}, \delta = \langle \,,\, \rangle, D)$ be the Euclidean space, equipped with its standard inner product and flat connection. Denote by $(\mathbb{S}, \bar{g}, \bar{\nabla})$ the unit sphere with its induced round metric and Levi-Civita connection.

Denote by $\{E_i\}_{i=1}^{n}$ the standard orthonormal basis of $\mathbb{R}^n$, and define the upper halfspace $\mathbb{R}^n_{+} = \{ y \in \mathbb{R}^n : \langle y, E_n \rangle > 0 \}$.

Let $\Sigma$ be a properly embedded, smooth, compact, connected, and orientable hypersurface contained in the closure $\overline{\mathbb{R}^{n}_+}$, such that its interior lies entirely in $\mathbb{R}^n_+$ and its boundary satisfies $\partial \Sigma \subset \partial \mathbb{R}^{n}_+$. We say that $\Sigma$ is a capillary hypersurface with constant contact angle $\theta \in (0, \pi)$ if the following condition holds along $\partial \Sigma$:
\begin{equation}\label{capillary angle condition}
\langle \nu, E_n \rangle \equiv \cos \theta.
\end{equation}Here, $\nu$ denotes the outer unit normal of $\Sigma$. The enclosed region by the hypersurface $\Sigma$ and the hyperplane $\partial \mathbb{R}^{n}_+$ is denoted by $\widehat{\Sigma}$.  We say $\Sigma$ is strictly convex if $\widehat{\Sigma}$ is a convex body, and the second fundamental form of $\Sigma$ is positive-definite. Throughout this paper, we work only with smooth ($C^{\infty}$), strictly convex capillary hypersurfaces.

The capillary spherical cap of radius $r$ and intersecting with $\partial \mathbb{R}^{n}_+$ at a constant angle $\theta \in (0, \pi)$ is defined as
\eq{
\mathcal{C}_{\theta, r} := \left\{ x \in \overline{\mathbb{R}_+^{n}} \mid |x + r \cos \theta E_{n}| = r \right\}.
}
For simplicity, we put $\mathcal{C}_\theta = \mathcal{C}_{\theta, 1}$.

Let $\tilde{\nu}=\nu-\cos\theta E_{n}:\Sigma\to  \cC_{\theta}$ denote the capillary Gauss map of $\Sigma$, which is a diffeomorphism (cf. \cite[Lem. 2.2]{MWWX24}). The capillary support function of $\Sigma$, $s=s_{\Sigma}: \cC_{\theta}\to \bbR$, is defined as
\eq{
s(\zeta) =\langle \tilde{\nu}^{-1}(\zeta), \zeta + \cos\theta E_{n} \rangle,\q \zeta \in \cC_{\theta}.
}

We write $\hat{s}=\hat{s}_{\widehat{\Sigma}}: \bbS\to \bbR$ for the (standard) support function of the convex body $\widehat{\Sigma}$, which is defined as
\eq{
\hat{s}(u):=\max_{x\in \widehat{\Sigma}} \langle u,x\rangle,\q \forall u\in \bbS.
}

We say a function $\phi \in C(\cC_{\theta})$ is even if 
\[
\phi(-\zeta_1,\ldots, -\zeta_{n-1},\zeta_{n}) = \phi(\zeta_1,\ldots,\zeta_{n-1}, \zeta_{n}), \quad \forall \zeta \in \cC_{\theta}.
\]
Similarly, a capillary hypersurface is said to be even if its capillary support function is even.

Let $\sigma$ denote the spherical Lebesgue measure on $\cC_{\theta}$. The following existence result was established in \cite{MWW25}. For a streamlined derivation of the $C^2$-norm bound required in its proof, see also \cite[Lem. 4.9]{HIS25}.

\begin{thm}[Capillary Minkowski problem in halfspace]\label{Min P}
Let $\theta \in (0, \frac{\pi}{2})$. Suppose $0 < \phi \in C^2(\cC_\theta)$ satisfies  
\begin{equation}
\int_{\cC_\theta} \langle \zeta, E_{i} \rangle \phi(\zeta) \, d\sigma(\zeta) = 0, \quad i = 1,2,\ldots,n-1,  
\end{equation}  
where $\{E_{i}\}_{i=1,\ldots,n-1}$ is the horizontal basis of $\partial \mathbb{R}^{n}_+$.  Then there exists a $C^{3,\alpha}$ strictly convex capillary hypersurface $\Sigma \subset \overline{\mathbb{R}^{n}_{+}}$ such that its Gauss-Kronecker curvature $\mathcal{K}$ satisfies  
\begin{equation} \label{capillary-Minkowski-problem}
\frac{1}{\mathcal{K}(\tilde{\nu}^{-1}(\zeta))} = \phi(\zeta), \quad \forall \zeta \in \cC_\theta.
\end{equation}  
Moreover, $\Sigma$ is unique up to a horizontal translation in $\overline{\mathbb{R}^{n}_{+}}$.  
\end{thm}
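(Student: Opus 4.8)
The plan is to recast \eqref{capillary-Minkowski-problem} as a Monge--Ampère equation for the capillary support function and to solve it by the method of continuity. If $s=s_\Sigma\cn\cC_\theta\to\bbR$ is the capillary support function of a smooth strictly convex capillary hypersurface, then $\tilde\nu^{-1}(\zeta)$ is recovered from $s$ and $\bar\nabla s$, and a standard computation (cf.\ \cite{MWWX24}) shows that $1/\cK(\tilde\nu^{-1}(\zeta))=\det(\bar\nabla^2 s+s\,\bar g)$ (the determinant relative to $\bar g$), while \eqref{capillary angle condition} becomes a first-order oblique, capillary-Neumann-type boundary condition $\cB s=0$ on $\partial\cC_\theta$. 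The reference profile is the unit cap $\cC_\theta$ itself, for which $1/\cK\equiv 1$ and, by rotational symmetry about the $E_n$-axis, $\int_{\cC_\theta}\langle\zeta,E_i\rangle\,d\sigma=0$. Interpolating, put $\phi_t=(1-t)+t\phi>0$, $t\in[0,1]$; since the moment conditions are linear, each $\phi_t$ satisfies them. Let $I$ be the set of $t$ for which $\det(\bar\nabla^2 s_t+s_t\bar g)=\phi_t$, $\cB s_t=0$, has a smooth strictly convex solution. Then $0\in I$, and it suffices to show $I$ is open and closed in $[0,1]$.

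Openness follows from the implicit function theorem. Linearizing the Monge--Ampère operator together with $\cB$ gives a uniformly elliptic operator with an oblique boundary condition, hence a Fredholm operator of index $0$ on the relevant H\"older spaces. Since the problem is invariant under horizontal translations $\Sigma\mapsto\Sigma+v$, $v\in\partial\bbR^n_+$, the kernel contains the restrictions to $\cC_\theta$ of $\zeta\mapsto\langle\zeta,E_i\rangle$, $i=1,\dots,n-1$; a capillary Minkowski identity makes the linearized operator formally self-adjoint (with respect to $d\sigma$ plus a boundary term), so kernel and cokernel coincide and equal exactly this $(n-1)$-dimensional space. Hence the linearized equation $L\eta=\psi$, $\cB\eta=0$ is solvable precisely when $\int_{\cC_\theta}\psi\,\langle\zeta,E_i\rangle\,d\sigma=0$ for $i=1,\dots,n-1$. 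Applying this with $\psi=\tfrac{d}{dt}\phi_t=\phi-1$ and using the hypothesis on $\phi$ shows the path continues, and after normalizing away horizontal translations (e.g.\ imposing $\int_{\cC_\theta}\langle\zeta,E_i\rangle\,s_t\,d\sigma=0$) the operator is an isomorphism.

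Closedness is the crux and reduces to uniform a priori estimates. The $C^0$ bounds come from the integrated equation: $\int_{\cC_\theta}\phi_t\,d\sigma$ (the capillary area) is fixed along the path, controlling the size of $\widehat\Sigma_t$, while the moment conditions pin its horizontal barycenter and so prevent $\widehat\Sigma_t$ from escaping to infinity; thus $\|s_t\|_{C^0}$ is bounded after normalizing translations, and the $C^1$ bound follows from convexity. The decisive step is the global $C^2$ estimate \emph{up to $\partial\cC_\theta$}, i.e.\ a two-sided bound on the eigenvalues of $\bar\nabla^2 s_t+s_t\bar g$: one bounds the largest principal radius by the maximum principle applied to a suitable auxiliary function, the main difficulty being the boundary terms produced by the oblique capillary condition, which must be shown to have a favorable sign (or be absorbed by a barrier) --- this is exactly \cite[Lem.\ 4.9]{HIS25}. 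Granting it, the equation is uniformly elliptic, so the Evans--Krylov theorem and its oblique-boundary counterpart give a uniform $C^{2,\alpha}(\overline{\cC_\theta})$ estimate, and differentiating the equation and bootstrapping with Schauder theory upgrades the solution to the asserted $C^{3,\alpha}$ regularity of $\Sigma$. Hence $I=[0,1]$, and $t=1$ yields the desired hypersurface.

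Finally, uniqueness up to horizontal translation follows from a Brunn--Minkowski-type argument for capillary bodies: if $\Sigma_0,\Sigma_1$ both solve \eqref{capillary-Minkowski-problem}, then using the representation of the capillary volume as an integral of $s\,(1/\cK)$ over $\cC_\theta$, together with the equality case of the capillary Minkowski inequality, forces $\widehat\Sigma_0$ and $\widehat\Sigma_1$ to be homothetic; equality of their capillary areas makes the ratio $1$, so they differ by a translation, and the constraint $\partial\Sigma_i\subset\partial\bbR^n_+$ forces it to be horizontal. I expect the global $C^2$ estimate up to the boundary to be the main obstacle: the oblique capillary boundary condition obstructs the standard interior maximum-principle argument and requires the boundary analysis of \cite[Lem.\ 4.9]{HIS25}.
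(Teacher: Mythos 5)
A preliminary but important point: the paper does not prove this theorem at all. It is imported from \cite{MWW25}, with \cite[Lem.~4.9]{HIS25} cited only for a streamlined derivation of the $C^2$ bound needed there. So your proposal can only be measured against the standard route in the literature, which it does follow in outline: recast \eqref{capillary-Minkowski-problem} as the Monge--Amp\`ere equation $\det(\bar\nabla^2 s+s\bar g)=\phi$ on $\cC_\theta$ with the Robin condition $\bar\nabla_\mu s=\cot\theta\,s$ on $\partial\cC_\theta$ (exactly the form in which the equation reappears in the iteration of Section~3), run the continuity method with a priori estimates culminating in the global $C^2$ bound up to $\partial\cC_\theta$, upgrade regularity via Evans--Krylov/\cite{LT86} and Schauder, and deduce uniqueness from the equality case of the capillary Minkowski inequality \cite[Thm.~1.1]{MWWX24}. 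Your identification of the boundary $C^2$ estimate as the crux, and your uniqueness argument, match the intended architecture.

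Two steps in your sketch are asserted where the real work lies. (i) \emph{Openness.} You need that the kernel of $\eta\mapsto F^{ij}(\bar\nabla^2_{ij}\eta+\eta\bar g_{ij})$ subject to $\bar\nabla_\mu\eta=\cot\theta\,\eta$ is \emph{exactly} $\mathrm{span}\{\langle\cdot,E_1\rangle,\dots,\langle\cdot,E_{n-1}\rangle\}$ and that the cokernel coincides with it. Translation invariance puts the horizontal linear functions in the kernel; that nothing else lies there is an infinitesimal uniqueness statement, and the formal self-adjointness you invoke requires the boundary terms $\int_{\partial\cC_\theta}F^{\mu j}(\psi\bar\nabla_j\eta-\eta\bar\nabla_j\psi)$ to vanish. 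This is not a consequence of the Robin condition alone: one needs the structural fact that $\mu$ is an eigendirection of $\bar\nabla^2 s+s\bar g$ along $\partial\cC_\theta$ for capillary data (the same fact underlying the boundary computation with $F^{\mu\mu}$ in the paper's Section~3). This boundary bookkeeping is precisely what distinguishes the capillary problem from the closed one and cannot be waved through. (ii) \emph{The $C^0$ estimate.} The claim that the fixed capillary area ``controls the size of $\widehat\Sigma_t$'' is false as stated: bounded surface area does not bound the diameter of a convex body. The correct argument uses the uniform pointwise bounds $c^{-1}\le\phi_t\le c$ (available since $\phi_t=(1-t)+t\phi$ and $\phi>0$ on the compact $\cC_\theta$), the identity $V=\tfrac1n\int s f\,d\sigma$, and an isoperimetric-type non-degeneracy argument to bound the diameter above and the inradius below; only then do the moment conditions fix the horizontal position. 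With these two points repaired, the proposal is a credible reconstruction of the proof in \cite{MWW25}, but as written it would not stand on its own.
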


Let $\bbS_{\theta} := \{y \in \bbS : y_n \geq \cos\theta\}$ and define $T$ by
\eq{
T: \bbS_{\theta} \to \cC_{\theta}, \quad T(u) = u - \cos\theta E_{n}.
}
The surface area measure of $\widehat{\Sigma}$ in \autoref{Min P} can be determined by
\eq{
S(\widehat{\Sigma}, \omega) = \mu := 
\begin{cases}
	\int_{\omega} \phi \circ T \, d\sigma, & \omega \subseteq \bbS_{\theta}, \\
	\int_{\bbS_{\theta}} u_n \phi \circ T(u) \, d\sigma(u), & \omega = \{-E_{n}\}, \\
	0, & \omega \subseteq \bbS \setminus \left(\{-E_{n}\} \cup \bbS_{\theta}\right).
\end{cases}
}

Note that by applying the standard Minkowski existence theorem, we can find a convex body $K$ whose surface area measure is $\mu$. However, a priori, it is not evident to us that a translation of $K$ satisfies the capillary angle condition \eqref{capillary angle condition}, and hence yields a capillary convex body. Nevertheless, when $\theta \in (0, \frac{\pi}{2})$, in view of \autoref{Min P}, such a translation of $K$ does exist. It would be interesting to establish this fact directly from the definition of $\mu$, in particular, for all $\theta \in (0, \pi)$ and under a milder regularity assumption $\phi \in C^{\alpha}(\cC_{\theta})$.

The Minkowski problem, rooted in the seminal work of Hermann Minkowski \cite{Min97,Min03}, is a foundational question in convex geometry. It seeks to determine whether a given finite Borel measure on the sphere arises as the surface area measure of a convex body, and if so, whether such a body is unique. Foundational results by Aleksandrov \cite{Ale56}, Nirenberg \cite{Nir57}, Pogorelov \cite{Pog52,Pog71}, and Cheng--Yau \cite{CY76} established the existence, uniqueness, and regularity of solutions to this classical problem.

A significant generalization of the classical Minkowski problem is the $L_p$-Minkowski problem for $p > 1$, introduced by Lutwak \cite{Lut93} within the framework of the $L_p$-Brunn--Minkowski--Firey theory. The even case (i.e. origin-symmetric case) was subsequently solved by Lutwak and Oliker \cite{LO95}. When $p < 1$, and particularly in the cases $p = 0$ and $p = -n$, new difficulties arise due to the breakdown of standard techniques such as the continuity method and lack of compactness. Substantial progress in this range has been made by Chou and Wang \cite{CW06}, B{\"o}r{\"o}czky, Lutwak, Yang and Zhang \cite{BLYZ13}, and Bianchi et al. \cite{BBCY19}. For further developments, we refer the reader to \cite{ACW01, Sta02, TW08, HLYZ16, Li19, HXY21, GLW22, GLW24, LXYZ24} and the recent surveys \cite{Bor23, HYZ25}.

Geometric flows have also emerged as a powerful tool for investigating Minkowski-type problems. The logarithmic Gauss curvature flow \cite{CW00} and its $L_p$ analogues \cite{BG23, BIS21, CL21, LWW20, BIS19} have played a role in the analysis of both classical and dual formulations. More recently, Guang et al. \cite{GLW22} extended these techniques to the supercritical range $p < -n$.

The scope of Minkowski-type problems continues to grow, including recent extensions into the capillary setting in the halfspace \cite{MWW25} for $p = 1$ and \cite{MWW25b} for $p > 1$, both for $\theta \in \left(0, \frac{\pi}{2}\right)$. Further developments within the capillary framework can be found in \cite{KLS25, MWWX24, HWYZ24, WWX24, SW24, MW23, WW20, WX19}. In the present paper, we obtain the following existence result for the even capillary $L_p$-Minkowski problem for $-n < p < 1$:

\begin{thm}\label{Min LP2}
Let $\theta \in \left(0, \frac{\pi}{2}\right)$, $-n < p < 1$, and let $0 < \phi \in C^{\infty}(\cC_\theta)$ satisfy  
\[
\phi(-\zeta_1,\ldots, -\zeta_{n-1},\zeta_{n}) = \phi(\zeta_1,\ldots,\zeta_{n-1}, \zeta_{n}), \quad \forall \zeta \in \cC_{\theta}.
\]
Then there exists an even, smooth, strictly convex capillary hypersurface $\Sigma$ whose capillary support function $s$ and Gauss curvature $\mathcal{K}$ satisfy
\begin{equation}
\frac{s^{1-p}}{\mathcal{K} \circ \tilde{\nu}^{-1}} = \phi.
\end{equation}
\end{thm}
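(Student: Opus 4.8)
The plan is to follow the iterative scheme outlined in the abstract, treating the capillary $L_p$-Minkowski equation $s^{1-p}/(\cK\circ\ti\nu^{-1})=\phi$ as a fixed-point problem. Write the equation as $1/(\cK\circ\ti\nu^{-1})=s^{p-1}\phi$; note the right-hand side depends on the unknown hypersurface through its own support function. The natural iteration is: given an even strictly convex capillary hypersurface $\Sigma_k$ with capillary support function $s_k$, apply \autoref{Min P} to the data $\phi_k:=s_k^{p-1}\phi$ to produce $\Sigma_{k+1}$ with $1/(\cK\circ\ti\nu^{-1})=s_k^{p-1}\phi$ on $\cC_\theta$; the operator $\Sigma_k\mapsto\Sigma_{k+1}$ is the ``capillary curvature image operator.'' A genuine solution is precisely a fixed point. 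To launch the iteration one must check that $\phi_k$ is admissible for \autoref{Min P}: positivity is clear, evenness of $s_k$ forces evenness of $\phi_k$ (using evenness of $\phi$), and the evenness kills the centering integrals $\int_{\cC_\theta}\langle\zeta,E_i\rangle\phi_k\,d\sigma=0$ for $i=1,\dots,n-1$ automatically since $\langle\zeta,E_i\rangle$ is odd in the first $n-1$ variables. One then fixes the horizontal-translation ambiguity in \autoref{Min P} by selecting the unique even representative, so the operator is well defined on the class of even capillary bodies. Some regularity bookkeeping is needed to keep $s_k\in C^\infty$ (or at least $C^2$) along the iteration; bootstrapping from the $C^{3,\alpha}$ output of \autoref{Min P} together with smoothness of $\phi$ and the elliptic Monge--Ampère structure of the capillary curvature equation should suffice.

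Next I would set up the monotone functional. The standard choice in $L_p$-Minkowski theory is (a capillary analogue of) the functional
\eq{
\cF(\Sigma) \;=\; \frac{1}{p}\int_{\cC_\theta} s^{p}\,\phi^{-?}\,d\sigma \;-\; (\text{a volume-type term}),
}
adjusted so that, along the iteration, $\cF(\Sigma_{k+1})\ge\cF(\Sigma_k)$ (or $\le$, depending on the sign convention for $p<1$), with equality if and only if $\Sigma_{k+1}=\Sigma_k$, i.e. a fixed point. The monotonicity should come from an inequality of Minkowski/Aleksandrov--Fenchel type in the capillary setting — concretely, from the fact that $\Sigma_{k+1}$ is the \emph{extremizer} of a mixed-volume-type quantity with prescribed curvature $s_k^{p-1}\phi$, so comparing the extremal value against the value at $\Sigma_k$ produces the desired one-sided bound. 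I would extract the needed capillary mixed-volume inequality either from the convex body $\widehat\Sigma$ via the correspondence $S(\widehat\Sigma,\cdot)=\mu$ recorded after \autoref{Min P}, or directly on $\cC_\theta$ by integration by parts against the capillary support function.

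The final step is a compactness/convergence argument: the monotone bounded sequence $\cF(\Sigma_k)$ converges, which forces the ``increments'' between consecutive iterates to go to zero; combined with uniform a priori $C^2$-estimates for the capillary Minkowski problem with data $s_k^{p-1}\phi$ (here $-n<p<1$ and $\theta\in(0,\tfrac\pi2)$ enter crucially — the range $p>-n$ should give the lower bound on $s_k$ preventing degeneration, and $\theta<\tfrac\pi2$ gives the geometric control used in \autoref{Min P} and \cite[Lem.~4.9]{HIS25}), one passes to a subsequential limit $\Sigma_\infty$ which is strictly convex and even, and the vanishing increments identify it as a fixed point, hence a solution. Smoothness of $\Sigma_\infty$ follows by elliptic regularity since the limiting equation $s^{1-p}/(\cK\circ\ti\nu^{-1})=\phi$ has smooth, positive right-hand side.

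I expect the main obstacle to be the \emph{uniform} a priori estimates that keep the iteration from degenerating — specifically, a two-sided bound $c\le s_k\le C$ on the capillary support functions independent of $k$. The upper bound and the $C^2$-bound should follow by feeding $\phi_k=s_k^{p-1}\phi$ into the estimates behind \autoref{Min P}, but these constants nominally depend on $\|\phi_k\|_{C^2}$ and on $\min\phi_k$, which in turn depend on $s_k$; closing this loop requires exploiting the monotone functional to bound the relevant scale-invariant quantities, and then using $p>-n$ (via the capillary isoperimetric/mixed-volume inequality) to convert an upper volume bound into a lower bound on $s_k$. This interplay — using the monotonicity of $\cF$ not just to get convergence but to feed back into the a priori estimates — is the technical heart of the argument, exactly as in the Euclidean subcritical $L_p$-Minkowski problem, and the capillary boundary condition is what one must check does not spoil it.
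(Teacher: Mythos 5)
Your overall strategy coincides with the paper's: iterate a curvature-image operator built on \autoref{Min P} over even hypersurfaces (where, as you note, evenness makes the centering conditions automatic), use a monotone functional together with uniform a priori estimates to extract a convergent subsequence, and identify the limit as a fixed point. However, three of the load-bearing ingredients are missing, and the first is structural. You define the iteration by prescribing the curvature data $s_k^{p-1}\phi$ with no normalization, and leave the monotone functional undetermined (``$\phi^{-?}$'', ``a volume-type term''). The paper instead prescribes
\begin{equation}
f_{\Sigma_{k+1}}=\frac{V(\widehat{\Sigma_k})}{\tfrac{1}{n}\int_{\cC_\theta}\phi\, s_{\Sigma_k}^p\,d\sigma}\;\phi\, s_{\Sigma_k}^{p-1},
\end{equation}
and this normalizing factor is precisely what yields $V(\widehat{\Sigma_k})=V(\widehat{\Sigma_k},\widehat{\Sigma_{k+1}}[n-1])$, hence $V(\widehat{\Sigma_{k+1}})\le V(\widehat{\Sigma_k})$ by the capillary Minkowski inequality, with equality iff $\Sigma_{k+1}=\Sigma_k$. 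Without it there is no volume control along the iteration and no equality-case characterization of fixed points. The monotone quantity is then $\cA_p^{\phi}(\Sigma)=V(\widehat{\Sigma})\bigl(\int_{\cC_\theta}\phi s_\Sigma^p\,d\sigma\bigr)^{-n/p}$ (suitably interpreted at $p=0$), and its monotonicity comes from a H\"older/Jensen comparison with a dual functional $\cB_p^{\phi}$ combined with the volume monotonicity --- not directly from an extremal property of $\Sigma_{k+1}$ as you suggest. (A fixed point of the normalized operator solves the equation only up to a multiplicative constant, removed afterwards by rescaling since $p<1\le n$.)

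The two remaining gaps are in the a priori estimates, which you correctly identify as the technical heart but do not close. The uniform upper bound on $\cA_p^{\phi}$ --- the input that, together with the volume bounds, gives $C^{-1}\le s_k\le C$ --- is obtained in the paper by reflecting $\widehat{\Sigma}$ across $\{x_n=0\}$ to produce an origin-symmetric convex body and applying the Blaschke--Santal\'o inequality; this is exactly where $-n\le p<1$, evenness, and $\theta<\tfrac{\pi}{2}$ enter, and your appeal to ``the capillary isoperimetric/mixed-volume inequality'' does not supply it. Finally, the circularity you flag in the $C^2$ estimate (the data at step $k+1$ involves $\bar\nabla^2 s_k$, so the constant from \autoref{Min P} is not a priori uniform in $k$) is resolved in the paper by proving the recursive inequality $a_{k+1}^{(n-1)/(n-2)}\le c_1+c_2 a_k$ at interior maxima (or $a_{k+1}\le c_3$ at boundary maxima) for $a_k=\max_{\cC_\theta}\sigma_1(\tau^{\sharp}[s_k])$; the superlinear exponent on the left forces $\sup_k a_k<\infty$. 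Until the normalization, the reflection/Blaschke--Santal\'o bound, and the recursive $C^2$ estimate are supplied, the proposal is a correct outline of the paper's scheme but not a proof.
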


It is important to note that although the standard continuity method is effective in the range $p \geq 1$, it generally fails for $p \in (-n,1)$ due to the lack of openness. Likewise, degree-theoretic approaches -- while successful in the classical setting -- are currently not viable in the capillary context due to the absence of suitable uniqueness results under Robin boundary conditions.

A variational method -- similar to those employed in the standard case of the $L_p $-Minkowski problem, where one seeks to maximize a functional over a suitable class of convex bodies -- does not readily apply here. Although one might attempt to formulate such a functional within the class of strictly convex capillary hypersurfaces, it is not clear to us a priori that an extremal hypersurface, whenever it exists, would be smooth and satisfies the constant contact angle condition: the lack of regularity prevents direct application of the Euler-Lagrange multiplier method via Aleksandrov's variational lemma (cf. \cite[Thm. 7.5.3]{Sch14}) and thus obstructs such a standard variational treatment.

While one possible route to establishing \autoref{Min LP2} would be through the construction of a suitable curvature flow -- see, for instance, \cite{BIS19, BG23} -- in this work, we pursue a different strategy which can be considered as more of a discrete flow. Similar to the works \cite{Iva16, Iva20}, we construct a class of new curvature image operators using the existence result of \autoref{Min P}, whose fixed points -- when they exist -- correspond to solutions of the capillary $L_p$-Minkowski problem.

Iteratively applying such curvature image operator to an initial even, strictly convex capillary hypersurface -- such as $\cC_\theta$ -- generates a sequence of even, strictly convex capillary hypersurfaces. The structural properties of our curvature image operator ensure that this sequence remains within a compact family, thereby allowing us to extract a convergent subsequence. By invoking the equality characterization in the Minkowski inequality (in place of Aleksandrov's variational lemma), we conclude that the limit hypersurface solves the even capillary $L_p$-Minkowski problem. The uniform $C^{m}$-norm bound (for all $m\geq 2$) along the iteration follows from the recent work \cite[Lem. 4.9]{HIS25} and by establishing a recursive inequality.

\section{Capillary curvature image operators}

\begin{defn}\label{def curv op} Let $\theta \in \left(0, \frac{\pi}{2}\right)$. Suppose $\Sigma$ is a strictly convex capillary hypersurface. We define the capillary curvature function of $\Sigma$ as
\eq{
f_{\Sigma} &:\cC_{\theta} \to (0,\infty), \quad 
f_{\Sigma}(\zeta) = \frac{1}{\cK \circ \tilde{\nu}^{-1}(\zeta)}.
}

Let $0 < \phi \in C^{\infty}(\cC_{\theta})$ be an even function and $\Sigma$ be an even strictly convex capillary hypersurface. We define the capillary curvature image $\OP\Sigma$ of $\Sigma$ as the unique, even strictly convex capillary hypersurface whose curvature function is
\eq{\label{curvature image}
f_{\OP\Sigma} = \frac{V(\widehat{\Sigma})}{\frac{1}{n} \int_{\cC_{\theta}} \phi s^p_{\Sigma} \, d\sigma} \phi s_{\Sigma}^{p-1}.
}
\end{defn}

Note that, in view of \autoref{Min P}, the operator $\OP$ is well-defined. The definition of our capillary curvature image operator $\OP\Sigma$ is motivated by some of the properties of Petty's curvature image operator, discovered by Lutwak in the classical setting; see \cite{Lut86} and \cite[Sec. 10.5]{Sch14}.

Note that $V(\widehat{\Sigma}) = V(\widehat{\Sigma}, \widehat{\OP\Sigma}[n-1])$ (see \cite{MWWX24} for the definition of the capillary mixed volume). Therefore, by \cite[Thm. 1.1]{MWWX24}, we obtain
\eq{\label{key}
V(\widehat{\Sigma}) \geq V(\widehat{\OP \Sigma}),
}
and equality holds if and only if $\Sigma = \OP\Sigma$. Here, the notation $\widehat{\OP\Sigma}[n-1]$ indicates that $\widehat{\OP\Sigma}$ appears $(n-1)$ times in the mixed volume.

\begin{defn}\label{def}
Let $\theta\in (0, \frac{\pi}{2})$ and $0<\phi\in C^{\infty}(\cC_{\theta})$ be an even function. Suppose $\Sigma$ is an even strictly convex capillary hypersurface. We define
\eq{
\mathcal{A}_p^{\phi}(\Sigma) &=
\begin{cases}
V(\widehat{\Sigma})\left(\int_{\cC_{\theta}}\phi s_{\Sigma}^p\, d\sigma\right)^{-\frac{n}{p}}, & p\neq 0,\\
V(\widehat{\Sigma})\exp\left(\frac{\int_{\cC_{\theta}} -\phi \log s_{\Sigma} \, d\sigma}{\frac{1}{n} \int_{\cC_{\theta}} \phi \, d\sigma}\right), & p=0,
\end{cases}\\
\mathcal{B}_p^{\phi}(\Sigma)&=
\begin{cases}
V(\widehat{\Sigma})^{1-n}\left(\int_{\cC_{\theta}} \phi^{-\frac{1}{p-1}} f_{\Sigma}^{\frac{p}{p-1}}\, d\sigma\right)^{\frac{n(p-1)}{p}}, & p\neq 0,1,\\
V(\widehat{\Sigma})^{1-n} \exp\left(\int_{\cC_{\theta}} \log(\frac{f_{\Sigma}}{\phi} )\, d\tilde{\sigma}\right)^n \left(\int_{\cC_{\theta}} \phi\, d\sigma\right)^n, & p=0,
\end{cases}\\
\Omega_p^{\phi}(\Sigma) &=
\begin{cases}
\int_{\cC_{\theta}} \phi^{-\frac{1}{p-1}} f_{\Sigma}^{\frac{p}{p-1}}\, d\sigma, & p\neq 0,1, \\
\exp\left(\frac{\int_{\cC_{\theta}} \phi \log f_{\Sigma}\, d\sigma}{\frac{1}{n} \int_{\cC_{\theta}} \phi\, d\sigma}\right), & p=0.
\end{cases}
}
Here $d\tilde{\sigma} := \dfrac{\phi}{\int_{\cC_{\theta}} \phi\, d\sigma}\, d\sigma$.
\end{defn}

\begin{lemma}Let $\theta\in (0,\frac{\pi}{2})$ and $0<\phi\in C^{\infty}(\cC_{\theta})$ be an even function. Suppose $\Sigma$ is an even strictly convex capillary hypersurface. Then
\eq{\label{key identity}
\mathcal{B}_p^{\phi}(\Lambda_p^{\phi}\Sigma)=n^{n}\left(\frac{V(\widehat{\Sigma})}{V(\widehat{\OP\Sigma})}\right)^{n-1}\mathcal{A}_p^{\phi}(\Sigma),
}
and
\eq{\label{key inequalities}
\begin{cases}
\mathcal{B}_p^{\phi}(\Sigma)\leq n^n \cA_p^{\phi}(\Sigma), & p<1,\\
\mathcal{B}_p^{\phi}(\Sigma)\geq n^n \cA_p^{\phi}(\Sigma), & p>1.
\end{cases}
}
\end{lemma}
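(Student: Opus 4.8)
The statement splits into two independent parts. The identity \eqref{key identity} is obtained by substituting the defining formula \eqref{curvature image} for $f_{\OP\Sigma}$ into the definition of $\cB_p^\phi$ and simplifying; the inequalities \eqref{key inequalities} follow from a single application of H\"older's inequality (in its reverse form when $p<1$) together with Jensen's inequality at the borderline value $p=0$. Beyond \autoref{Min P}, the only external ingredient is the integral representation of the capillary volume,
\[
n\,V(\widehat{\Sigma})=\int_{\cC_{\theta}}s_{\Sigma}\,f_{\Sigma}\,d\sigma,
\]
which is the diagonal case of the capillary mixed-volume formula of \cite{MWWX24} already used to derive \eqref{key}; everything else is bookkeeping with exponents.

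For \eqref{key identity} with $p\ne 0,1$, put $c:=nV(\widehat{\Sigma})\bigl(\int_{\cC_{\theta}}\phi\,s_{\Sigma}^{p}\,d\sigma\bigr)^{-1}$, so that $f_{\OP\Sigma}=c\,\phi\,s_{\Sigma}^{p-1}$. Then $\phi^{-\frac1{p-1}}f_{\OP\Sigma}^{\frac{p}{p-1}}=c^{\frac{p}{p-1}}\phi\,s_{\Sigma}^{p}$, hence
\[
\cB_p^\phi(\OP\Sigma)=V(\widehat{\OP\Sigma})^{1-n}\,c^{\,n}\Bigl(\int_{\cC_{\theta}}\phi\,s_{\Sigma}^{p}\,d\sigma\Bigr)^{\frac{n(p-1)}{p}}.
\]
Substituting $c^{\,n}=n^{n}V(\widehat{\Sigma})^{n}\bigl(\int_{\cC_{\theta}}\phi\,s_{\Sigma}^{p}\,d\sigma\bigr)^{-n}$ and using the crucial cancellation $-n+\tfrac{n(p-1)}{p}=-\tfrac np$, the right-hand side collapses to $n^{n}V(\widehat{\OP\Sigma})^{1-n}V(\widehat{\Sigma})^{n-1}\cA_p^\phi(\Sigma)$, i.e.\ \eqref{key identity}. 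The case $p=0$ is the same argument carried out through logarithms: here $f_{\OP\Sigma}/\phi=c_{0}\,s_{\Sigma}^{-1}$ with $c_{0}=nV(\widehat{\Sigma})\bigl(\int_{\cC_{\theta}}\phi\,d\sigma\bigr)^{-1}$, and since $\tilde\sigma$ is a probability measure, $\int_{\cC_{\theta}}\log(f_{\OP\Sigma}/\phi)\,d\tilde\sigma=\log c_{0}-\int_{\cC_{\theta}}\log s_{\Sigma}\,d\tilde\sigma$; exponentiating, raising to the $n$-th power, multiplying by $\bigl(\int_{\cC_{\theta}}\phi\,d\sigma\bigr)^{n}V(\widehat{\OP\Sigma})^{1-n}$, and recognizing $-n\int_{\cC_{\theta}}\log s_{\Sigma}\,d\tilde\sigma$ as the exponent appearing in $\cA_0^\phi(\Sigma)$, one recovers \eqref{key identity} again.

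For \eqref{key inequalities}, eliminating $V(\widehat{\Sigma})$ via the volume identity turns the ratio into
\[
\frac{\cB_p^\phi(\Sigma)}{n^{n}\cA_p^\phi(\Sigma)}=\left[\frac{\bigl(\int_{\cC_{\theta}}\phi^{-\frac1{p-1}}f_{\Sigma}^{\frac{p}{p-1}}\,d\sigma\bigr)^{\frac{p-1}{p}}\bigl(\int_{\cC_{\theta}}\phi\,s_{\Sigma}^{p}\,d\sigma\bigr)^{\frac1p}}{\int_{\cC_{\theta}}s_{\Sigma}\,f_{\Sigma}\,d\sigma}\right]^{n},\qquad p\ne 0,1.
\]
Writing $s_{\Sigma}f_{\Sigma}=(\phi^{1/p}s_{\Sigma})(\phi^{-1/p}f_{\Sigma})$ and applying H\"older's inequality with the conjugate pair $(p,\tfrac{p}{p-1})$ bounds the denominator above by the bracketed numerator when $p>1$ (both exponents exceed $1$) and below when $p<1$, $p\ne0$ (reverse H\"older: one exponent lies in $(0,1)$, the other is negative); raising to the power $n$ yields $\cB_p^\phi(\Sigma)\ge n^{n}\cA_p^\phi(\Sigma)$ and $\cB_p^\phi(\Sigma)\le n^{n}\cA_p^\phi(\Sigma)$ respectively. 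For $p=0$ the same reduction gives $\dfrac{\cB_0^\phi(\Sigma)}{n^{n}\cA_0^\phi(\Sigma)}=\left[\dfrac{1}{nV(\widehat{\Sigma})}\Bigl(\int_{\cC_{\theta}}\phi\,d\sigma\Bigr)\exp\Bigl(\int_{\cC_{\theta}}\log(s_{\Sigma}f_{\Sigma}/\phi)\,d\tilde\sigma\Bigr)\right]^{n}$, and Jensen's inequality for the concave function $\log$ against the probability measure $\tilde\sigma$ bounds $\exp\bigl(\int_{\cC_{\theta}}\log(s_{\Sigma}f_{\Sigma}/\phi)\,d\tilde\sigma\bigr)$ above by $\int_{\cC_{\theta}}(s_{\Sigma}f_{\Sigma}/\phi)\,d\tilde\sigma=\bigl(\int_{\cC_{\theta}}\phi\,d\sigma\bigr)^{-1}\int_{\cC_{\theta}}s_{\Sigma}f_{\Sigma}\,d\sigma=nV(\widehat{\Sigma})\bigl(\int_{\cC_{\theta}}\phi\,d\sigma\bigr)^{-1}$, whence $\cB_0^\phi(\Sigma)\le n^{n}\cA_0^\phi(\Sigma)$, consistent with the $p<1$ branch.

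The computations are routine; the two places that need care are the cancellation $-n+\tfrac{n(p-1)}{p}=-\tfrac np$ underlying \eqref{key identity}, and the direction of the H\"older estimate, which I expect to be the only genuinely error-prone step: across the regimes $p>1$, $0<p<1$, $p<0$, and the limit $p=0$ one must pick the correct (standard or reverse) form of H\"older's or Jensen's inequality and check that, after raising to the $n$-th power, it orients the comparison exactly as in \eqref{key inequalities}.
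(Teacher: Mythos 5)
Your proposal is correct and follows exactly the route the paper intends: the identity \eqref{key identity} by direct substitution of \eqref{curvature image} into $\mathcal{B}_p^{\phi}$, and the inequalities \eqref{key inequalities} from the volume formula $nV(\widehat{\Sigma})=\int_{\cC_{\theta}}s_{\Sigma}f_{\Sigma}\,d\sigma$ combined with H\"older (standard or reverse, according to the sign regime of $p$) for $p\neq 0$ and Jensen for $p=0$. The paper only sketches these steps, and your exponent bookkeeping and the orientation of each inequality check out in every case.
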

\begin{proof}
Verification of the first identity is straightforward. For the inequalities, note that 
\[
V(\widehat{\Sigma}) = \frac{1}{n} \int_{\cC_{\theta}} s_{\Sigma} f_{\Sigma}\, d\sigma.
\]
The inequalities then follow from the H\"{o}lder inequality when $p \neq 0$, and from the Jensen inequality when $p = 0$.
\end{proof}

\begin{lemma}\label{C0 estimate-step 1}
	Let $\theta\in (0, \frac{\pi}{2})$, $-n\leq p<1$ and $0<\phi\in C^{\infty}(\cC_{\theta})$ be an even function. Suppose $\Sigma$ is an even strictly convex capillary hypersurface. Then there exists a constant $a_p^{\phi}$ such that
\eq{
\mathcal{A}_p^{\phi}(\Sigma)\leq a_p^{\phi}, \q \mathcal{B}_p^{\phi}(\Sigma)\leq n^n a_p^{\phi}.
}
\end{lemma}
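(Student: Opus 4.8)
The plan is to prove the single estimate $\cA_p^{\phi}(\Sigma)\le a_p^{\phi}$; the second estimate then comes for free, since \eqref{key inequalities} gives $\mathcal{B}_p^{\phi}(\Sigma)\le n^{n}\cA_p^{\phi}(\Sigma)$ for $p<1$. Next I would use that $\cA_p^{\phi}$ is invariant under the rescaling $\Sigma\mapsto\lambda\Sigma$ (which fixes the origin in $\partial\mathbb{R}^n_+$ and hence preserves the capillary angle condition, convexity and evenness): one has $s_{\lambda\Sigma}=\lambda s_{\Sigma}$ and $V(\widehat{\lambda\Sigma})=\lambda^{n}V(\widehat{\Sigma})$, and a direct check of the homogeneities shows $\cA_p^{\phi}(\lambda\Sigma)=\cA_p^{\phi}(\Sigma)$ for every $p$ (including $p=0$). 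Therefore I may normalize so that $V(\widehat{\Sigma})=1$.

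The key device is a doubling across $\partial\mathbb{R}^{n}_+$. Let $\widehat{\Sigma}^{\mathrm{refl}}=\{(x_h,-x_n):(x_h,x_n)\in\widehat{\Sigma}\}$, where $x=(x_h,x_n)$ with $x_h$ the horizontal part, and set $K:=\operatorname{conv}(\widehat{\Sigma}\cup\widehat{\Sigma}^{\mathrm{refl}})$. Since $\widehat{\Sigma}$ is even and $\widehat{\Sigma}\subset\{x_n\ge0\}$, $K$ is an origin-symmetric convex body with $V(K)\ge 2V(\widehat{\Sigma})=2$. Writing $\hat s_{K}$ for the support function of $K$, we have $\hat s_{K}=\max(\hat s_{\widehat{\Sigma}},\hat s_{\widehat{\Sigma}^{\mathrm{refl}}})$ and $\hat s_{\widehat{\Sigma}^{\mathrm{refl}}}(u)=\hat s_{\widehat{\Sigma}}(u_h,-u_n)$; and whenever $u_n\ge0$, evaluating at a maximizer $x^{*}$ of $\langle(u_h,-u_n),\cdot\rangle$ over $\widehat{\Sigma}$ (which has $x^{*}_n\ge0$) gives $\hat s_{\widehat{\Sigma}}(u)\ge\hat s_{\widehat{\Sigma}}(u_h,-u_n)$, so $\hat s_{K}(u)=\hat s_{\widehat{\Sigma}}(u)$. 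Because $\theta\in(0,\tfrac\pi2)$, every $u\in\bbS_{\theta}$ has $u_n\ge\cos\theta>0$, so $\hat s_{K}=\hat s_{\widehat{\Sigma}}$ on $\bbS_{\theta}$. Combining this with the elementary identity $s_{\Sigma}(\zeta)=\hat s_{\widehat{\Sigma}}(\zeta+\cos\theta E_{n})$ and the isometry $T$, I get
\[
\int_{\cC_{\theta}}s_{\Sigma}^{-n}\,d\sigma=\int_{\bbS_{\theta}}\hat s_{K}^{-n}\,d\sigma\le\int_{\bbS}\hat s_{K}^{-n}\,d\sigma=nV(K^{\circ}),
\]
where $K^{\circ}$ is the polar body of $K$. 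The Blaschke--Santal\'{o} inequality for origin-symmetric bodies now yields $V(K^{\circ})\le V(B^{n})^{2}/V(K)\le \tfrac12 V(B^{n})^{2}$, hence
\[
\int_{\cC_{\theta}}s_{\Sigma}^{-n}\,d\sigma\le\Lambda:=\tfrac n2\,V(B^{n})^{2},
\]
a bound depending only on $n$.

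It remains to convert this into control of $\int_{\cC_{\theta}}\phi\,s_{\Sigma}^{p}\,d\sigma$, using $0<m_{\phi}:=\min\phi\le\phi\le\max\phi=:M_{\phi}$ on the compact set $\cC_{\theta}$; I would treat the three ranges separately. For $p\in(0,1)$, the monotonicity of power means on $(\cC_{\theta},\sigma)$ gives $\int s_{\Sigma}^{p}\,d\sigma\ge\sigma(\cC_{\theta})^{1+p/n}\big(\int s_{\Sigma}^{-n}\,d\sigma\big)^{-p/n}\ge\sigma(\cC_{\theta})^{1+p/n}\Lambda^{-p/n}>0$, and since $-n/p<0$ this bounds $\cA_p^{\phi}=\big(\int\phi\,s_{\Sigma}^{p}\,d\sigma\big)^{-n/p}$ from above. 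For $p\in[-n,0)$, H\"older's inequality with exponents $n/|p|\ge1$ and its conjugate gives $\int s_{\Sigma}^{p}\,d\sigma\le\Lambda^{|p|/n}\sigma(\cC_{\theta})^{1-|p|/n}$, whence $\int\phi\,s_{\Sigma}^{p}\,d\sigma\le M_{\phi}\Lambda^{|p|/n}\sigma(\cC_{\theta})^{1-|p|/n}$, and now $-n/p>0$ again bounds $\cA_p^{\phi}$. For $p=0$, applying Jensen's inequality to the probability measure $d\tilde\sigma=\phi\,d\sigma/\int\phi\,d\sigma$ gives $\cA_0^{\phi}=\exp\big(n\int\log(s_{\Sigma}^{-1})\,d\tilde\sigma\big)\le\big(\int s_{\Sigma}^{-1}\,d\tilde\sigma\big)^{n}\le\big(\tfrac{M_{\phi}}{m_{\phi}\sigma(\cC_{\theta})}\int_{\cC_{\theta}}s_{\Sigma}^{-1}\,d\sigma\big)^{n}\le\big(\tfrac{M_{\phi}}{m_{\phi}\sigma(\cC_{\theta})}\Lambda^{1/n}\sigma(\cC_{\theta})^{1-1/n}\big)^{n}$, the last step by H\"older. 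In every case the resulting bound depends only on $n$, $\theta$, $p$, $m_{\phi}$ and $M_{\phi}$, and setting $a_p^{\phi}$ to be this bound finishes the proof.

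I expect the step $\int_{\cC_{\theta}}s_{\Sigma}^{-n}\,d\sigma\le\Lambda$ to be the crux: any estimate of this integral via crude geometric quantities of $\widehat{\Sigma}$ (such as its height and inradius) is not scale-robust and fails to be uniform over the full family, and the whole point of the doubling construction is to reduce it to the classical Blaschke--Santal\'{o} inequality. The points to verify carefully are the convexity and origin-symmetry of $K$, the pointwise identity $\hat s_{K}=\hat s_{\widehat{\Sigma}}$ on $\bbS_{\theta}$, and the identification $s_{\Sigma}=\hat s_{\widehat{\Sigma}}\circ T^{-1}$ on $\bbS_{\theta}$.
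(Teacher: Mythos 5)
Your proof is correct and follows essentially the same route as the paper: reduce to bounding $\mathcal{A}_p^{\phi}$ via \eqref{key inequalities}, reflect $\widehat{\Sigma}$ across $\{x_n=0\}$ to obtain an origin-symmetric body $K$ with $\hat s_K = s_\Sigma\circ T$ on $\bbS_\theta$, and invoke the Blaschke--Santal\'o inequality. The only differences are cosmetic (normalizing $V(\widehat{\Sigma})=1$, replacing the paper's extension $\psi$ of $\phi$ by $\min\phi$ and $\max\phi$, and handling $0<p<1$ by power-mean monotonicity rather than the comparison $\mathcal{A}_p^{\phi}\leq(\int\phi\,d\sigma)^{-2n/p}\mathcal{A}_{-p}^{\phi}$), and your write-up supplies the supporting details more explicitly.
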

\begin{proof} In view of \eqref{key inequalities}, we need only to prove $\mathcal{A}_p^{\phi}(\Sigma)\leq a_p^{\phi}$.
Let $K$ denote the convex body enclosed by $\Sigma$ and its reflection across $x_{n}=0$. Note that $K$ is origin-symmetric. Let $\hat{s}_K$ denote the support function of $K$. Define 
 $\psi: \bbS\to (0,\infty)$ by
\eq{
\psi(u)=
\begin{cases}
	\phi\circ T(u),& u_{n}\geq \cos\theta,\\
	\phi\circ T(u_1,\ldots,u_{n-1}, -u_{n}),& u_{n}\leq -\cos\theta,\\
	\phi\circ T(\sin \theta\, \tilde{u}, \cos \theta),& -\cos\theta \leq u_{n} \leq \cos\theta,
\end{cases}
}
where $\tilde{u}:=\frac{u-\langle u,E_{n}\rangle E_{n}}{|u-\langle u,E_{n}\rangle E_{n}|}$.

Note that $\hat{s}_K(u)=s_{\Sigma}\circ T(u)$ for $u\in \bbS_{\theta}$. Hence, for $-n\leq p<0$, we have
\eq{
V(K)\left(\int_{\bbS}\psi \hat{s}_{K}^p\, d\sigma\right)^{-\frac{n}{p}}\geq \cA_p^{\phi}(\Sigma),
}
while for $0<p<1$, by the H\"{o}lder inequality, there holds
\eq{
\cA_p^{\phi}(\Sigma)\leq \left(\int_{\cC_{\theta}}\phi \, d\si\right)^{-\frac{2n}{p}}\cA_{-p}^{\phi}(\Sigma)\leq \left(\int_{\cC_{\theta}}\phi\, d\si\right)^{-\frac{2n}{p}} V(K)\left(\int_{\bbS}\psi \hat{s}_{K}^{-p}\, d\sigma\right)^{\frac{n}{p}}.
}
Therefore, the claim follows from the Blaschke-Santal\'{o} inequality:
\eq{
V(K)\left(\frac{1}{n}\int_{\bbS} \frac{1}{\hat{s}_K^n}\, d\si\right)\leq V(B)^2,
}
where $B$ denotes the unit ball.

The claim for the $p=0$ case follows from the Jensen inequality and the Blaschke-Santal\'{o} inequality:
\eq{
\cA_{0}^{\phi}(\Sigma)\leq \frac{\cA_{-1}^{\phi}(\Sigma)}{\left(\int_{\cC_{\theta}}\phi \, d\si\right)^n}\leq  \frac{V(K)}{\left(\int_{\cC_{\theta}}\phi \, d\si\right)^n}\left(\int_{\bbS} \frac{\psi}{\hat{s}_{K}}\, d\sigma\right)^{n}.
}\end{proof}

\begin{lemma}\label{lem1}Let $\theta\in (0, \frac{\pi}{2})$, $-n\leq p<1$ and $0<\phi\in C^{\infty}(\cC_{\theta})$ be an even function. Suppose $\Sigma$ is an even strictly convex capillary hypersurface. The following inequalities hold:
\begin{enumerate}
  \item $\mathcal{A}_p^{\phi}(\Sigma)\leq \left(\frac{V(\widehat{\OP\Sigma})}{V(\widehat{\Sigma})}\right)^{n-1}\mathcal{A}_p^{\phi}(\OP \Sigma)\leq \mathcal{A}_p^{\phi}(\OP \Sigma).$
  \item If $p\neq 0$, then 
   \begin{align}\label{ineq-Omega-neq-0} 
   \Omega_p^{\phi}(\Sigma)^{\frac{p-1}{p}}\leq \Omega_p^{\phi}(\OP \Sigma)^{\frac{p-1}{p}}.
   \end{align}
  If $p=0$, then
  \begin{align}\label{ineq-Omega-p=0}
  \Omega_0^{\phi}(\Sigma)\leq \Omega_0^{\phi}(\Lambda_0^{\phi} \Sigma).
  \end{align}
  \item If $p\neq 0$, then 
   \eq{\label{ineq-Omega-volume-p-neq-0} 
   c_{p}^{\phi}\Omega_p^{\phi}(\Sigma)^{\frac{n(p-1)}{p(n-1)}}\leq V(\widehat{(\OP)^i \Sigma})\leq V(\widehat{\Sigma}), \quad \forall i.} 
 If $p=0,$ then
  \eq{\label{ineq-Omega-volume-p=0} c_0^{\phi}\Omega_0^{\phi}(\Sigma)^{\frac{1}{n-1}}\leq V(\widehat{(\Lambda_0^{\phi})^i \Sigma})\leq V(\widehat{\Sigma}),\quad \forall i.}
\end{enumerate}
\end{lemma}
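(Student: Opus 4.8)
The plan is to bootstrap from the key volume monotonicity \eqref{key} together with the identity \eqref{key identity} and the inequalities \eqref{key inequalities}, applied to the operator $\OP$ and its iterates. I will treat the three items in order, since each relies on the previous.

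\emph{Item (1).} Applying \eqref{key identity} with $\Sigma$ replaced by $\OP\Sigma$ would not help directly; instead I start from \eqref{key identity} as stated, $\mathcal{B}_p^{\phi}(\OP\Sigma)=n^n\bigl(V(\widehat{\Sigma})/V(\widehat{\OP\Sigma})\bigr)^{n-1}\mathcal{A}_p^{\phi}(\Sigma)$, and combine it with the inequality $\mathcal{B}_p^{\phi}(\OP\Sigma)\le n^n\mathcal{A}_p^{\phi}(\OP\Sigma)$ from \eqref{key inequalities} (valid since $p<1$). This immediately yields
\[
\Bigl(\tfrac{V(\widehat{\Sigma})}{V(\widehat{\OP\Sigma})}\Bigr)^{n-1}\mathcal{A}_p^{\phi}(\Sigma)\le \mathcal{A}_p^{\phi}(\OP\Sigma),
\]
which is the first inequality in (1). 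The second then follows from \eqref{key}, i.e. $V(\widehat{\OP\Sigma})\le V(\widehat{\Sigma})$, so that $\bigl(V(\widehat{\OP\Sigma})/V(\widehat{\Sigma})\bigr)^{n-1}\le 1$.

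\emph{Item (2).} The quantities $\Omega_p^{\phi}$, $\mathcal{B}_p^{\phi}$ and $\mathcal{A}_p^{\phi}$ are related by elementary algebra once one substitutes the explicit formula \eqref{curvature image} for $f_{\OP\Sigma}$ into the definitions in \autoref{def}; concretely, evaluating $\Omega_p^{\phi}$ at $\OP\Sigma$ produces a power of $V(\widehat{\Sigma})\big/\bigl(\tfrac1n\int_{\cC_\theta}\phi s_\Sigma^p\,d\sigma\bigr)$ times $\int_{\cC_\theta}\phi^{-1/(p-1)}(\phi s_\Sigma^{p-1})^{p/(p-1)}d\sigma=\int_{\cC_\theta}\phi s_\Sigma^p\,d\sigma$, so $\Omega_p^{\phi}(\OP\Sigma)$ collapses to a clean expression in $V(\widehat{\Sigma})$ and $\int_{\cC_\theta}\phi s_\Sigma^p\,d\sigma$, hence in $\mathcal{A}_p^{\phi}(\Sigma)$ and $V(\widehat{\Sigma})$. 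Comparing this with $\Omega_p^{\phi}(\Sigma)$ itself — which by the same substitution is $\mathcal{B}_p^{\phi}(\Sigma)\cdot V(\widehat{\Sigma})^{n-1}$ raised to an appropriate power — the desired monotonicity \eqref{ineq-Omega-neq-0} reduces exactly to \eqref{key inequalities} (the direction of the exponent $\tfrac{p-1}{p}$, which changes sign across $p=0$ and across $p=1$, is precisely compensated by the two cases in \eqref{key inequalities}). The $p=0$ case \eqref{ineq-Omega-p=0} is the analogous computation with $\log$ in place of powers, using the Jensen-inequality form of \eqref{key inequalities}. I should be careful to track the sign of $(p-1)/p$ on each of the intervals $(-n,0)$ and $(0,1)$ when raising the volume-ratio factor to that power and invoking $V(\widehat{\OP\Sigma})\le V(\widehat{\Sigma})$ — this bookkeeping is the one genuinely delicate point.

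\emph{Item (3).} The upper bound $V(\widehat{(\OP)^i\Sigma})\le V(\widehat{\Sigma})$ for all $i$ is immediate from iterating \eqref{key}. For the lower bound, note that part (2) shows $\Omega_p^{\phi}$ is monotone along the iteration, so $\Omega_p^{\phi}((\OP)^i\Sigma)$ is controlled (from below, in the appropriate power) by $\Omega_p^{\phi}(\Sigma)$; on the other hand, part (1) iterated gives $\mathcal{A}_p^{\phi}((\OP)^i\Sigma)\ge \mathcal{A}_p^{\phi}(\Sigma)$, while \autoref{C0 estimate-step 1} bounds $\mathcal{A}_p^{\phi}$ from above by $a_p^{\phi}$. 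Expressing $V(\widehat{(\OP)^i\Sigma})$ in terms of $\mathcal{A}_p^{\phi}((\OP)^i\Sigma)$ and $\int_{\cC_\theta}\phi s_{(\OP)^i\Sigma}^p\,d\sigma$, and the latter integral in turn in terms of $\Omega_p^{\phi}((\OP)^i\Sigma)$ via $\mathcal{B}_p^{\phi}$ and the bound from \autoref{C0 estimate-step 1}, one isolates a lower bound for $V(\widehat{(\OP)^i\Sigma})$ of the stated form; the constant $c_p^{\phi}$ will be an explicit power of $a_p^{\phi}$ (and of $\int_{\cC_\theta}\phi\,d\sigma$). The $p=0$ case \eqref{ineq-Omega-volume-p=0} runs identically with the logarithmic functionals.

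\emph{Main obstacle.} The real work is the algebraic identity-chasing in item (2): keeping the exponents $p/(p-1)$, $n(p-1)/p$, $1-n$ straight, verifying that the substitution \eqref{curvature image} really does make $\int_{\cC_\theta}\phi^{-1/(p-1)}f_{\OP\Sigma}^{p/(p-1)}d\sigma$ telescope, and confirming that the sign of $(p-1)/p$ lines up with \eqref{key inequalities} on each subinterval of $(-n,1)$ and with $V(\widehat{\OP\Sigma})\le V(\widehat{\Sigma})$. Everything else is a direct consequence of \eqref{key}, \eqref{key identity}, \eqref{key inequalities} and \autoref{C0 estimate-step 1}.
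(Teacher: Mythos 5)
Your proposal is correct and follows essentially the same route as the paper: item (1) is the identical reduction via \eqref{key identity} and \eqref{key inequalities}, item (2) rests on the same telescoping computation of $\Omega_p^{\phi}(\OP\Sigma)$ (your reduction to \eqref{key inequalities} is just the paper's H\"older/Jensen step in packaged form), and item (3) combines the monotonicity from (2) with the bound $a_p^{\phi}$ of \autoref{C0 estimate-step 1} exactly as in the paper. The one inaccuracy is the caution you raise in item (2) about a volume-ratio factor and the sign of $\frac{p-1}{p}$: after the substitution, both $\Omega_p^{\phi}(\Sigma)^{\frac{p-1}{p}}$ and $\Omega_p^{\phi}(\OP\Sigma)^{\frac{p-1}{p}}$ carry the same factor $V(\widehat{\Sigma})^{\frac{n-1}{n}}$, so the comparison reduces cleanly to $\mathcal{B}_p^{\phi}(\Sigma)\leq n^n\mathcal{A}_p^{\phi}(\Sigma)$ with no sign bookkeeping and no appeal to \eqref{key}.
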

\begin{proof}
The left-hand side inequality in (1) can be rewritten as
\[
\mathcal{B}_p^{\phi}(\Lambda_p^{\phi}\Sigma) \leq n^n \mathcal{A}_p^{\phi}(\OP \Sigma),
\]
which follows directly from \eqref{key inequalities}. The right-hand side inequality in (1) is a consequence of \eqref{key}.

For the inequality \eqref{ineq-Omega-neq-0} in the case $p \neq 0$, recall that
\eq{
\Omega_p^{\phi}(\Sigma) = \int_{\cC_\theta} \phi^{-\frac{1}{p-1}} f_{\Sigma}^{\frac{p}{p-1}} \, d\sigma,
}
and
\eq{\Omega_p^{\phi}(\Lambda_p^\phi \Sigma)&=\int_{\cC_\theta}\phi^{-\frac{1}{p-1}}f_{\Lambda_p^\phi \Sigma}^\frac{p}{p-1} \,d\sigma\\
&=n^\frac{p}{p-1} \(\int_{\cC_\theta} \phi s_{\Sigma}^p \,d\sigma\)^{-\frac{1}{p-1}} V(\widehat{\Sigma})^{\frac{p}{p-1}}\\
&=\(\int_{\cC_\theta} \phi s_{\Sigma}^p \,d\sigma\)^{-\frac{1}{p-1}} \(\int_{\cC_\theta} s_\Sigma f_\Sigma\,d\sigma\)^{\frac{p}{p-1}}.
}
Then \eqref{ineq-Omega-neq-0} follows from the H\"older inequality. For $p=0$, we have
\eq{
\Omega_0^{\phi}(\Sigma)=\exp\(\frac{\int_{\cC_\theta}\phi \log f_\Sigma \,d\sigma}{\frac{1}{n}\int_{\cC_\theta}\phi \,d\sigma}\),
}
and
\eq{
\Omega_0^{\phi}(\Lambda_0^\phi\Sigma)=\exp\left(\frac{\int_{\cC_\theta}\phi \log (\frac{V(\widehat{\Sigma})}{\frac{1}{n}\int_{\cC_\theta}\phi \,d\sigma}\frac{\phi}{s_\Sigma}) \,d\sigma}{\frac{1}{n}\int_{\cC_\theta}\phi \,d\sigma}\right).
}
Then the inequality \eqref{ineq-Omega-p=0} is equivalent to 
\eq{
\int_{\cC_\theta}\phi \log f_\Sigma \,d\si &\leq \int_{\cC_\theta}\phi \log (\frac{\int_{\cC_\theta} s_{\Sigma}f_\Sigma\,d\sigma}{\int_{\cC_\theta}\phi \,d\sigma}\frac{\phi}{s_\Sigma}) \,d\sigma\\
&=\left(\int_{\cC_\theta}\phi \,d\sigma\right) \log \frac{\int_{\cC_\theta} s_\Sigma f_\Sigma \,d\sigma}{\int_{\cC_\theta}\phi \,d\sigma}-\int_{\cC_\theta}  \phi\log (\frac{s_{\Sigma}}{\phi})\,d\sigma.
}
which follows from the Jensen inequality:
\eq{
\frac{\int_{\cC_\theta} \log (\frac{s_\Sigma f_{\Sigma}}{\phi})\phi\,d\sigma}{\int_{\cC_\theta}\phi \,d\sigma}\leq \log (  \frac{\int_{\cC_\theta}\frac{ s_\Sigma f_\Sigma}{\phi} \phi\,d\sigma}{\int_{\cC_\theta}\phi \,d\sigma}).
}

To show the left-hand side of the inequality \eqref{ineq-Omega-volume-p-neq-0} in (3) for $p\neq 0$, by using (2) iteratively $(i+1)$-times, we have
\eq{
\Omega_p^{\phi}(\Sigma)^\frac{n(p-1)}{p(n-1)}&\leq \Omega_p^\phi((\Lambda_p^\phi)^{i+1}\Sigma)^\frac{n(p-1)}{p(n-1)}\\
&=\(\int_{\cC_\theta}\phi s_{(\Lambda_p^\phi)^{i}\Sigma}^p \,d\sigma \)^{-\frac{n}{p(n-1)}}\( \int_{\cC_\theta} s_{(\Lambda_p^\phi)^{i}\Sigma}f_{(\Lambda_p^\phi)^{i}\Sigma}\,d\sigma  \)^\frac{n}{n-1}\\
&=n^\frac{n}{n-1}\mathcal{A}_p^\phi( (\Lambda_p^\phi)^{i}\Sigma)^\frac{1}{n-1} V(\widehat{(\Lambda_p^\phi)^{i}\Sigma})\\
&\leq n^\frac{n}{n-1}(a_p^\phi)^\frac{1}{n-1} V(\widehat{(\Lambda_p^\phi)^{i}\Sigma})=:(c_p^{\phi})^{-1}V(\widehat{(\Lambda_p^\phi)^{i}\Sigma}),
  }
where we used \autoref{C0 estimate-step 1} in the last inequality. The right-hand side of \eqref{ineq-Omega-volume-p-neq-0} in (3) then follows from
\eq{
V(\widehat{(\Lambda_p^\phi)^i\Sigma})\leq V(\widehat{(\Lambda_p^\phi)^{i-1}\Sigma})\leq \cdots \leq V(\widehat{\Sigma}),
}
due to \eqref{key}. 

Similarly, for the case $p=0$, the left-hand side of \eqref{ineq-Omega-volume-p=0} can be deduced from \eqref{ineq-Omega-p=0} and \autoref{C0 estimate-step 1}:
\eq{
\Omega_0^\phi(\Sigma) &\leq \Omega_0^\phi((\OP)^{i+1}\Sigma)\\
&=\exp\left(\frac{\int_{\cC_\theta}\phi \log(\frac{V(\widehat{(\OP)^i\Sigma})}{\frac{1}{n}\int_{\cC_\theta}\phi \,d\sigma}\frac{\phi}{s_{(\OP)^i\Sigma}})\,d\sigma}{\frac{1}{n}\int_{\cC_\theta}\phi \,d\sigma}\right)\\
&=V(\widehat{(\OP)^i\Sigma})^{n-1} \(\frac{1}{n}\int_{\cC_\theta}\phi \,d\sigma\)^{-n} \exp\(\frac{\int_{\cC_\theta}\phi\log\phi \,d\sigma}{\frac{1}{n}\int_{\cC_\theta}\phi\,d\sigma}  \) \mathcal{A}_0^\phi((\Lambda_0^\phi)^i\Sigma)\\
&\leq V(\widehat{(\OP)^i\Sigma})^{n-1} \(\frac{1}{n}\int_{\cC_\theta}\phi \,d\sigma\)^{-n} \exp\(\frac{\int_{\cC_\theta}\phi\log\phi\, d\sigma}{\frac{1}{n}\int_{\cC_\theta}\phi\,d\sigma}  \) a_0^\phi\\
&=: (c_0^\phi)^{-(n-1)} V(\widehat{(\OP)^i\Sigma})^{n-1}.
}
The right-hand side of \eqref{ineq-Omega-volume-p=0} follows again from \eqref{key}.
\end{proof}

\begin{lemma}\label{C0 estimate}
	Let $\theta\in (0, \frac{\pi}{2})$, $-n<p<1$ and $0<\phi\in C^{\infty}(\cC_{\theta})$ be an even function. Suppose $\Sigma$ is an even strictly convex capillary hypersurface with $c_1\leq V(\widehat{\Sigma})\leq c_2$ and $ \cA_p^{\phi}(\Sigma)\geq c_3$. Then there exists $C=C(c_i)$ such that
	\eq{
C^{-1}\leq s_{\Sigma}\leq C.
	}
\end{lemma}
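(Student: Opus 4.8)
The plan is to obtain two-sided bounds on $s_\Sigma$ purely from the bounds on $V(\widehat\Sigma)$ and the lower bound on $\cA_p^\phi(\Sigma)$, by passing to the origin-symmetric convex body $K$ enclosed by $\Sigma$ and its reflection across $\{x_n=0\}$, exactly as in the proof of \autoref{C0 estimate-step 1}. Since $\widehat\Sigma$ is a capillary convex body with contact angle $\theta\in(0,\tfrac\pi2)$, its support function $\hat s_{\widehat\Sigma}$ is comparable (with constants depending only on $\theta$) to the support function $\hat s_K$ of $K$; and since $K$ is obtained by gluing $\widehat\Sigma$ to its reflection, $V(K)$ is comparable to $V(\widehat\Sigma)$, hence $c_1'\le V(K)\le c_2'$. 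Likewise $s_\Sigma\circ T(u)=\hat s_K(u)$ for $u\in\bbS_\theta$, and the contact angle condition forces $\hat s_K$ on all of $\bbS$ to be controlled above and below by its values on $\bbS_\theta$ (up to $\theta$-dependent constants), so it suffices to prove $C^{-1}\le\hat s_K\le C$ on $\bbS$.

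The upper bound on $\hat s_K$ is the easy direction: the lower bound on $\cA_p^\phi(\Sigma)$ together with $V(\widehat\Sigma)\le c_2$ gives (for $p\neq0$) an upper bound on $\int_{\cC_\theta}\phi s_\Sigma^p\,d\sigma$ when $p<0$, or a lower bound when $0<p<1$; in either case, translating to $K$ via the comparison $\hat s_K\sim\hat s_{\widehat\Sigma}\sim s_\Sigma\circ T$ and using $0<\phi\in C^\infty$, we get an integral bound of the form $\int_\bbS \psi\,\hat s_K^{p}\,d\sigma$ bounded (in the appropriate direction). Since $K$ is origin-symmetric and convex, $\hat s_K$ attains its maximum $R:=\max\hat s_K$ in some direction $u_0$ and $K$ contains the segment $[-Ru_0,Ru_0]$, so $\hat s_K(u)\ge R|\langle u,u_0\rangle|$; feeding this into the integral bound (for $p<0$ this gives an upper bound on $R^p\int|\langle u,u_0\rangle|^p\psi$, the spherical integral of $|\langle u,u_0\rangle|^p$ converging precisely because $p>-1\ge -n$... actually we only need $p>-1$ here, which holds since $p>-n$ is weaker — but note when $n=1$ the hypothesis is vacuous; for $n\ge2$ one integrates over a great subsphere, and convergence needs $p>-(n-1)$; this is where the strict inequality $p>-n$ and a slicing argument enter) yields $R\le C$. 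For $0<p<1$ one argues dually via the lower bound on $\int\phi s_\Sigma^p$ and the inclusion $K\subset R\bbB$.

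The lower bound on $\hat s_K$ (equivalently, that $K$ does not degenerate to a lower-dimensional set) is the main obstacle. Here the Blaschke--Santal\'o inequality enters: combining $\cA_p^\phi(\Sigma)\ge c_3$ with the just-established upper bound $V(\widehat\Sigma)\le c_2$ forces $\int_{\cC_\theta}\phi s_\Sigma^p\,d\sigma$ to stay in a bounded range, and feeding the translation to $K$ into Blaschke--Santal\'o $V(K)\,\tfrac1n\int_\bbS\hat s_K^{-n}\,d\sigma\le V(\bbB)^2$ together with the \emph{lower} bound $V(K)\ge c_1'$ gives $\int_\bbS\hat s_K^{-n}\,d\sigma\le C$, which precludes $\hat s_K$ from being too small on a set of positive measure. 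To upgrade this to a pointwise lower bound one uses that $\hat s_K$ is the support function of a convex body already known to lie in $R\bbB$: if $\hat s_K(u_1)=r:=\min\hat s_K$ were tiny, then $K$ is squeezed between two hyperplanes at distance $\approx r$ in direction $u_1$ while having width $\le 2R$ in the other directions, so $\hat s_K(u)\lesssim r + R\sqrt{1-\langle u,u_1\rangle^2}$ near $\pm u_1$... more cleanly, $V(K)\le 2r\cdot(\text{cross-sectional }(n-1)\text{-volume})\le 2r\,\omega_{n-1}R^{n-1}$, so $r\ge V(K)/(2\omega_{n-1}R^{n-1})\ge c_1'/(2\omega_{n-1}C^{n-1})>0$. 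Transporting back through the comparisons $s_\Sigma\sim\hat s_K$ on $\bbS_\theta$ and the $\theta$-dependent control of $\hat s_K$ on all of $\bbS$ then gives $C^{-1}\le s_\Sigma\le C$ with $C=C(c_1,c_2,c_3,\theta,n,p,\phi)$. The $p=0$ case, excluded from the statement's range but worth noting, would run identically using the Jensen-inequality reformulation of $\cA_0^\phi$ from \autoref{C0 estimate-step 1}.
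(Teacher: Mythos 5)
Your reduction to the origin-symmetric body $K$ and the transfer of the hypotheses to the pair $\bigl(V(K),\int_{\bbS}\psi\,\hat s_K^{q}\,d\sigma\bigr)$ is exactly the paper's route; the difference is that the paper then finishes in one line by citing the non-degeneracy lemma \cite[Lem.~6.1]{Iva16}, whereas you try to prove that lemma inline, and your version of it has a genuine gap. The problem is the upper bound on $R=\max_{\bbS}\hat s_K$. From $\cA_p^{\phi}(\Sigma)\ge c_3$ and $V(\widehat\Sigma)\le c_2$ you get (for $p<0$) a \emph{lower} bound on $\int\phi s_\Sigma^p\,d\sigma$ (not an upper bound as you wrote -- for $p<0$ the exponent $-n/p$ is positive, so the directions you state for $p<0$ and $0<p<1$ are both reversed), and you then want to contradict it via $\hat s_K(u)\ge R\,|\langle u,u_0\rangle|$, which gives $\int\psi\,\hat s_K^{\,p}\,d\sigma\le R^{\,p}\max\psi\int_{\bbS}|\langle u,u_0\rangle|^{p}\,d\sigma$. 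That last integral is finite only for $p>-1$ (the density of $t=\langle u,u_0\rangle$ near $t=0$ is comparable to a constant), so the argument collapses on the whole subrange $p\in(-n,-1]$, which is precisely where the result is hard. Your parenthetical discussion of this point is internally inconsistent: $p>-n$ does \emph{not} imply $p>-1$, the threshold is not $-(n-1)$, and the ``slicing argument'' you appeal to is never supplied. What is actually needed (and what the cited lemma provides) is the full set of $n$ orthogonal semi-axes $a_1\ge\cdots\ge a_n$ of $K$ with $\prod_j a_j$ pinched by $V(K)$, the pointwise bound $\hat s_K(u)\ge\max_j a_j|\langle u,u_j\rangle|\ge\prod_j\bigl(a_j|\langle u,u_j\rangle|\bigr)^{1/n}$, and the convergence of $\int_{\bbS}\prod_j|\langle u,u_j\rangle|^{p/n}\,d\sigma$, which holds exactly for $p>-n$; one then splits the sphere near and away from the great subsphere orthogonal to the long axis. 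Without this, the circumradius bound -- and hence everything downstream -- is unproved for $p\le -1$.

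The rest is essentially fine but partly redundant or mis-stated. Your lower bound on $\hat s_K$ via $V(K)\le 2r\,\omega_{n-1}R^{n-1}$ is correct \emph{once} $R\le C$ is known, and the Blaschke--Santal\'o detour there is unnecessary (as you half-notice). For $0<p<1$ your direct argument does work, since then $\int_{\bbS_\theta}|\langle u,u_0\rangle|^{p}\,d\sigma$ is bounded below away from zero. Finally, $p=0$ is \emph{not} excluded from the lemma's range $-n<p<1$; the paper handles it by setting $q(0)=-1$ and again invoking \cite[Lem.~6.1]{Iva16}, so your closing remark is incorrect and the $p=0$ case also needs the multi-axis argument (for $q=-1$ the single-segment integral already diverges).
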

\begin{proof}
Define $q(p)=p$ for $p<0$, $q(p)=-p$ for $0<p<1$ and $q(p)=-1$ for $p=0$. Then from the proof  of \autoref{C0 estimate-step 1}, it follows that
\eq{
	V(K)\left(\int_{\bbS}\psi \hat{s}_{K}^{q(p)}\, d\sigma\right)^{-\frac{n}{q(p)}}\geq c_4,
}
where $c_4$ depends on $c_3$ and $\phi$.
Now by \cite[Lem. 6.1]{Iva16}, the claim follows.
\end{proof}

\section{An iterative scheme}

\begin{lemma}\label{uniform C1 estimate}
Let $\theta\in (0,\frac{\pi}{2})$, $-n<p<1$, and $0<\phi\in C^{\infty}(\cC_{\theta})$ be an even function. 
Suppose $\Sigma$ is an even strictly convex capillary hypersurface. Then for some constant $C$ depending  on $\theta, \Sigma$ and $\phi$ we have 
\eq{\label{C1 estimate}
\frac{1}{C}\leq s_{(\OP)^i \Sigma}\leq C,\q |\hat{s}_{\widehat{(\OP)^i \Sigma}}|\leq C.
}
Moreover, for each $m\geq 1$, there exists $C_m$ depending on $\theta, \Sigma$ and $\phi$ such that
\eq{
\|s_{(\OP)^i \Sigma}\|_{C^{m}(\cC_{\theta})}\leq C_m,\q \forall i.
}
\end{lemma}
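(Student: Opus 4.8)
Abbreviate $\Sigma_{i}:=(\OP)^{i}\Sigma$. The plan is to establish, in order: the uniform two‑sided bound $C^{-1}\le s_{\Sigma_{i}}\le C$ together with $|\hat{s}_{\widehat{\Sigma_{i}}}|\le C$ and $\|s_{\Sigma_{i}}\|_{C^{1}(\cC_{\theta})}\le C$; uniform ellipticity of the Monge--Amp\`ere equation solved by each $s_{\Sigma_{i+1}}$; and then the uniform $C^{m}$‑bounds by a bootstrap resting on \cite[Lem. 4.9]{HIS25}. Throughout, each $\Sigma_{i}$ is a priori at least $C^{3,\alpha}$ by \autoref{Min P}, and since $\Sigma=\Sigma_{0}$ is smooth it is harmless to enlarge every constant below to also cover $i=0$.

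For the first part, I would apply \autoref{lem1}(3) to the fixed hypersurface $\Sigma$ and use \eqref{key}: this produces constants $0<c_{1}\le c_{2}$, depending only on $\theta,\phi,\Sigma$, with $c_{1}\le V(\widehat{\Sigma_{i}})\le c_{2}$ for all $i$. By \autoref{lem1}(1) the sequence $i\mapsto\mathcal{A}_{p}^{\phi}(\Sigma_{i})$ is nondecreasing, so $\mathcal{A}_{p}^{\phi}(\Sigma_{i})\ge\mathcal{A}_{p}^{\phi}(\Sigma)=:c_{3}>0$ for all $i$. Feeding $c_{1},c_{2},c_{3}$ into \autoref{C0 estimate} gives $C^{-1}\le s_{\Sigma_{i}}\le C$ uniformly in $i$. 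Since each $\widehat{\Sigma_{i}}$ is a convex body in $\overline{\bbR^{n}_{+}}$ touching $\partial\bbR^{n}_{+}$ whose $(x_{1},\dots,x_{n-1})$‑position is pinned at the origin by evenness, and since $\hat{s}_{\widehat{\Sigma_{i}}}=s_{\Sigma_{i}}\circ T$ on $\bbS_{\theta}$ while $\sin\theta>0$, the upper bound on $s_{\Sigma_{i}}$ forces a uniform bound on the circumradius of $\widehat{\Sigma_{i}}$; hence $|\hat{s}_{\widehat{\Sigma_{i}}}|\le C$, and, the spherical gradient of a support function being the associated boundary point, $\|s_{\Sigma_{i}}\|_{C^{1}(\cC_{\theta})}\le C$. (This is the one step that uses both $\theta\in(0,\frac{\pi}{2})$ and the evenness.)

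Uniform ellipticity is then immediate from \eqref{curvature image}: the capillary curvature function of $\Sigma_{i+1}$ is $f_{\Sigma_{i+1}}=\lambda_{i}\,\phi\,s_{\Sigma_{i}}^{p-1}$ with $\lambda_{i}=V(\widehat{\Sigma_{i}})\big/\big(\tfrac1n\int_{\cC_{\theta}}\phi s_{\Sigma_{i}}^{p}\,d\sigma\big)$, and since $V(\widehat{\Sigma_{i}})$, $\phi$, and $s_{\Sigma_{i}}$ are each bounded between positive constants uniformly in $i$, so is $\lambda_{i}$; thus $0<c^{-1}\le f_{\Sigma_{i+1}}\le c$ uniformly. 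Hence the equation $\det(\bar\nabla^{2}s_{\Sigma_{i+1}}+s_{\Sigma_{i+1}}\bar{g})=f_{\Sigma_{i+1}}$ on $\cC_{\theta}$, with the oblique boundary condition coming from \eqref{capillary angle condition}, has right‑hand side pinched between positive constants and a uniform $C^{0}$‑bound on its solution.

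The uniform $C^{m}$‑bounds then follow by induction on $m$, fed by a recursive inequality along the iteration. Substituting $f_{\Sigma_{i+1}}=\lambda_{i}\phi s_{\Sigma_{i}}^{p-1}$ into the a priori $C^{2}$‑estimate of \cite[Lem. 4.9]{HIS25} and its higher‑order Schauder analogues for this oblique boundary value problem -- and using that $t\mapsto t^{p-1}$ is smooth on $[C^{-1},C]$ and $\lambda_{i}$ is uniformly two‑sided bounded -- one obtains, for each $m\ge 2$, an inequality $\|s_{\Sigma_{i+1}}\|_{C^{m}(\cC_{\theta})}\le C_{m}\,\Phi_{m}\!\big(\|s_{\Sigma_{i}}\|_{C^{m-1}(\cC_{\theta})}\big)$ with $\Phi_{m}$ affine (or, should one be forced to bound $\|s_{\Sigma_{i+1}}\|_{C^{m}}$ only in terms of $\|s_{\Sigma_{i}}\|_{C^{m}}$, one first interpolates against the uniform $C^{0,1}$‑bound to make the growth sublinear). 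Starting from the uniform $C^{1}$‑bound and ascending: if $\|s_{\Sigma_{i}}\|_{C^{m-1}}\le C_{m-1}$ for all $i$, the recursion yields $\|s_{\Sigma_{i}}\|_{C^{m}}\le C_{m}$ for all $i$; as $m$ is arbitrary, the claim follows. The main obstacle is precisely this last stage: one needs the up‑to‑the‑boundary regularity theory for the capillary Monge--Amp\`ere operator on $\cC_{\theta}$ -- the $C^{2}$‑estimate of \cite[Lem. 4.9]{HIS25} and then higher‑order boundary Schauder estimates compatible with the oblique condition \eqref{capillary angle condition} -- and one must set up the recursive inequality so that, together with the uniform lower‑order bounds, it propagates a uniform constant rather than compounding geometrically. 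Everything preceding this stage is routine given the lemmas already at hand.
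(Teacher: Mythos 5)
Your first stage ($C^0$ bounds via \autoref{lem1} and \autoref{C0 estimate}, the $C^1$ and $|\hat{s}|$ bounds, and the uniform two-sided pinching of $f_{\Sigma_{i+1}}=\gamma_i\phi s_{\Sigma_i}^{p-1}$) matches the paper and is fine. The gap is at the $C^2$ level, which you correctly flag as ``the main obstacle'' but then do not actually resolve. Your base-case recursion $\|s_{\Sigma_{i+1}}\|_{C^{2}}\leq C\,\Phi\big(\|s_{\Sigma_i}\|_{C^{1}}\big)$ is not available: the maximum-principle $C^2$ estimate for the fully nonlinear equation $\si_{n-1}(\tau^{\sharp}[s_{i+1}])=f$ differentiates the equation twice and therefore sees $\bar\De f$, i.e.\ $\bar\De s_{\Sigma_i}^{\frac{p-1}{n-1}}$, so the bound on $\max\si_1(\tau^{\sharp}[s_{i+1}])$ unavoidably involves $\max\si_1(\tau^{\sharp}[s_{i}])$, not merely the $C^1$ norm of $s_{\Sigma_i}$. (For $m\geq 3$ your ``gain two derivatives by Schauder'' induction does close, but only once the uniform $C^{2,\alpha}$ bound is already in hand.) Your fallback --- interpolating against the uniform $C^{0,1}$ bound to render the growth sublinear --- does not work either: interpolation controls lower norms by higher ones, and cannot convert a linear recursion $a_{i+1}\leq c_1+c_2a_i$ in the $C^2$ quantity into a bounded one when $c_2\geq 1$.

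What actually saves the argument in the paper is a structural feature of the $\si_{n-1}$ operator, not interpolation: at an interior maximum of $\si_1(\tau^{\sharp}[s_{i+1}])$ one has $F^{ij}\bar g_{ij}\geq c_n f^{-\frac{1}{n-2}}\si_1^{\frac{1}{n-2}}$, which upgrades the resulting inequality to
\eq{
\si_1(\tau^{\sharp}[s_{i+1}])^{\frac{n-1}{n-2}}\leq c_1+c_2\,\si_1(\tau^{\sharp}[s_i]),
}
i.e.\ \eqref{C2-bound iteration}, with the \emph{superlinear} exponent $\frac{n-1}{n-2}>1$ on the left (the boundary-maximum alternative gives the absolute bound $\si_1(\tau^{\sharp}[s_{i+1}])\leq c_3$ after showing $F^{\mu\mu}$ is bounded below). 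Setting $a_i=\max_{\cC_\theta}\si_1(\tau^{\sharp}[s_i])$, the recursion $a_{i+1}\leq(c_1+c_2a_i)^{\frac{n-2}{n-1}}$ is genuinely sublinear and propagates a uniform bound: with $a=\max\{1,a_0,2c_1,2c_2,c_3\}$ one checks $a_i\leq a^{n-2}$ for all $i$ by contradiction at the first failing index. Only after this uniform $C^2$ bound do \cite{LT86} and Schauder give the $C^{2,\alpha}$ and higher bounds. Without identifying this mechanism (or an equivalent one), your induction does not get off the ground at $m=2$.
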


\begin{proof}
By \autoref{C0 estimate-step 1} and \autoref{lem1}, for some constants $c_i$, we have
\eq{
c_1\leq V(\widehat{(\OP)^i \Sigma})\leq c_2,\q \cA_p^{\phi}((\OP)^i \Sigma)\geq c_3 ,\q  \forall i.
}
Hence, by \autoref{C0 estimate}, we have
\eq{
\frac{1}{C}\leq s_{(\OP)^i \Sigma}\leq C,\q \forall i.
}
From \cite[Lem. 4.2]{HIS25} and \cite[Lem. 4.8]{HIS25}, it follows that
\eq{
\|s_{(\OP)^i \Sigma}\|_{C^1(\cC_{\theta})}\leq C, \q |\hat{s}_{\widehat{(\OP)^i \Sigma}}|\leq C,
}
for a (new) constant $C$.

Next, we need to obtain the uniform $C^2$-norm bound. We consider the case $n \geq 3$. Let us define $\tau[\xi] := \bar{\nabla}^2 \xi + \xi \bar{g}$, and let $\sigma_k(\tau^{\sharp}[\xi])$ denote the $k$-th elementary symmetric function of the eigenvalues of $\tau[\xi]$ with respect to the metric $\bar{g}$. Note that at each iteration step $i\geq 0$, we are solving 
\eq{
\left\{
\begin{aligned}
\si_{n-1}(\tau^{\sharp}[\xi]) &= \frac{V(\widehat{(\OP)^i\Sigma})}{\frac{1}{n}\int_{\cC_\theta} \phi s_{(\OP)^i\Sigma}^p \,d\sigma} \phi s_{(\OP)^i\Sigma}^{p-1}, &\quad \text{in } \cC_\theta, \\
\bar\nabla_\mu \xi &= \cot\theta \, \xi, &\quad \text{on } \partial \cC_\theta
\end{aligned}
\right.
}
whose solution by \autoref{Min P} exists and is denoted by $s_{(\OP)^{i+1}\Sigma}$.

For simplicity, we set $s_i = s_{(\OP)^i\Sigma}$ with $s_0=s_{\Sigma}$ and 
\eq{
\gamma_i = \frac{V(\widehat{(\OP)^i\Sigma})}{\frac{1}{n}\int_{\cC_\theta} \phi s_i^p \,d\sigma}, \qquad f = \left(\gamma_i \phi s_i^{p-1}\right)^{\frac{1}{n-1}}.
}

We follow the approach in the proof of \cite[Lem. 4.9]{HIS25}, with particular attention to ensuring that the $C^2$-estimate remains uniformly controlled with respect to the iteration index $i$.

Let $F = \sigma_{n-1}^{\frac{1}{n-1}}$. If the maximum of $\si_1(\tau^{\sharp}[s_{i+1}])$ is attained in the interior of $\cC_\theta$, then
\eq{
0 \geq F^{ij}\bar\nabla^2_{i,j} \si_1 &= -\bar g^{kl} F^{ij,pq} \bar\nabla_k \tau_{ij} \bar\nabla_l \tau_{pq} - (n-1)f + \bar\De f + F^{ij} \bar g_{ij} \si_1 \\
&\geq -(n-1)f + \bar\De f + F^{ij} \bar g_{ij} \si_1,
}
where we used $\bar g^{kl}F^{ij,pq} \bar\nabla_k \tau_{ij} \bar\nabla_l \tau_{pq} \leq 0$ and the one-homogeneity and concavity of $F$. Moreover, we have
\eq{
F^{ij} \bar g_{ij} = \frac{1}{n-1}\si_{n-1}^{\frac{2-n}{n-1}} \si_{n-2} &\geq c_n \si_{n-1}^{-\frac{1}{(n-1)(n-2)}} \si_1^{\frac{1}{n-2}}= c_n f^{-\frac{1}{n-2}} \si_1^{\frac{1}{n-2}}.
}
Hence, we obtain
\eq{
\si_1(\tau^{\sharp}[s_{i+1}])^{\frac{n-1}{n-2}} \leq c_n^{-1} \gamma_i^{\frac{1}{n-1}} f^{\frac{1}{n-2}}\left( (n-1) \phi^{\frac{1}{n-1}} s_i^{\frac{p-1}{n-1}} - \bar\De(\phi^{\frac{1}{n-1}} s_i^{\frac{p-1}{n-1}}) \right).
}
Note that
\eq{
- \bar\De(\phi^{\frac{1}{n-1}} s_i^{\frac{p-1}{n-1}}) &= - s_i^{\frac{p-1}{n-1}} \bar\De \phi^{\frac{1}{n-1}} - 2 \langle \bar\nabla \phi^{\frac{1}{n-1}}, \bar\nabla s_i^{\frac{p-1}{n-1}} \rangle - \phi^{\frac{1}{n-1}} \bar\De s_i^{\frac{p-1}{n-1}} \\
&\leq c_0 - \frac{p-1}{n-1} \phi^{\frac{1}{n-1}} s_i^{\frac{p-n}{n-1}} \bar\De s_i \\
&\leq c_1 + \frac{1-p}{n-1} \phi^{\frac{1}{n-1}} s_i^{\frac{p-n}{n-1}} \si_1(\tau^{\sharp}[s_i]),
}
where we used the uniform $C^1$-bound on $s_i$, the uniform lower bound on $\min_{\cC_\theta} s_i$ from \autoref{uniform C1 estimate}, and the $C^2$-bound on $\phi$. Thus,
\eq{\label{C2-bound iteration}
\si_1(\tau^{\sharp}[s_{i+1}])^{\frac{n-1}{n-2}} \leq c_1 + c_2 \si_1(\tau^{\sharp}[s_i]),
}
where $c_1$, $c_2$ are positive constants depending only on $\theta,\Sigma,\phi$.

Now suppose the maximum of $\si_1(\tau^{\sharp}[s_{i+1}])$ is attained at a boundary point $q \in \partial \cC_\theta$. Then at $q$,
\eq{
0 \leq F^{\mu\mu} \bar\nabla_\mu \si_1 &= \cot\theta \sum_i (F^{\mu\mu} - F^{ii})(\la_\mu - \la_i) + \bar\nabla_\mu f \\
&= \cot\theta \left(f - F^{\mu\mu} \si_1 + \sum_i F^{\mu\mu} \la_\mu - \sum_i F^{ii} \la_\mu \right) + \bar\nabla_\mu f \\
&\leq \cot\theta \left(n f - F^{\mu\mu} \si_1 - \sum_i F^{ii} \la_\mu \right) + \bar\nabla_\mu f.
}
Thus, using the uniform $C^0$-bound on $s_i$ and that $\bar\nabla_\mu s_i = \cot\theta s_i$, we obtain
\eq{
\si_1(q) &\leq \frac{1}{F^{\mu\mu}(q)}\left(\frac{\max |\bar\nabla_\mu f|}{\cot\theta } + n f(q)\right) \\
&=\frac{1}{F^{\mu\mu}(q)} \left(\frac{\gamma_i^{\frac{1}{n-1}} \max |\bar\nabla_\mu(\phi^{\frac{1}{n-1}} s_i^{\frac{p-1}{n-1}})|}{ \cot\theta } + n \gamma_i^{\frac{1}{n-1}} \phi^{\frac{1}{n-1}}(q) s_i(q)^{\frac{p-1}{n-1}}\right)\\
&\leq \frac{C_0}{F^{\mu\mu}(q)} \label{key-ineq-sigma_1}.
}
To show $F^{\mu\mu}$ cannot be very small, note that
\eq{
c := \min_{\cC_\theta} (\gamma_i \phi s_i^{p-1}) \leq \si_{n-1}(\la) = \la_\mu \si_{n-2}(\la | \la_\mu) \leq c' \la_\mu F^{\mu\mu},
}
where $c'$ depends only on $\theta, \phi$ and $\Sigma$. Thus,
\eq{
0 \leq F^{\mu\mu} \bar\nabla_\mu \si_1 &\leq \cot\theta \, (n f - \sum_i F^{ii} \la_\mu) + \bar\nabla_\mu f \\
&\leq n f \cot\theta - \frac{c \cot\theta}{c' F^{\mu\mu}} \sum_i F^{ii} + \bar\nabla_\mu f.
}
Using $\sum_i F^{ii}  \geq c_n$ from the concavity of $F$, and $\bar\nabla_\mu s_i = \cot\theta s_i$ we conclude that $F^{\mu\mu}$ is uniformly bounded below. In view of \eqref{key-ineq-sigma_1}, we obtain
\eq{
\sigma_1(\tau^{\sharp}[s_{i+1}]) \leq c_3,
}
where $c_3$ depends only on $\theta, \phi$ and $\Sigma$.

Let $a_i = \max_{\cC_\theta} \si_1(\tau^{\sharp}[s_i])$ for $i\geq 0$. In summary, we have found constants $c_1, c_2, c_3$, depending only on $\theta, \phi$ and $\Sigma$, such that for all $i \geq 0$:
\eq{
\text{either} \quad a_{i+1}^{\frac{n-1}{n-2}} \leq c_1 + c_2 a_i, \quad \text{or} \quad a_{i+1} \leq c_3.
}
Let $a := \max\{1, a_0, 2c_1, 2c_2, c_3\}$. We claim that $a_i \leq a^{n-2}$. For $i = 0$, the claim holds. Suppose $i \geq 1$ is the smallest index such that $a_i > a^{n-2}$. Then,
\eq{
a^{n-1} < a_i^{\frac{n-1}{n-2}} \leq \frac{a}{2}(1 + a_{i-1}) \quad \Rightarrow \quad a^{n-2} < \frac{a_{i-1} + 1}{2}.
}
This implies that $a_{i-1} \geq 1$, and
\eq{
a^{n-2} < \frac{a_{i-1} + 1}{2} \leq a_{i-1},
}
a contradiction.

Having established the $C^2$-norm bound, the $C^{2,\alpha}$ and uniform $C^m$-norm bounds for all $m \geq 3$ follow from \cite{LT86} and the Schauder estimates.
\end{proof}

We can now apply the capillary curvature image operator $\OP$ iteratively to construct a solution to the even $L_p$-Minkowski problem. We may take $\Sigma = \cC_\theta$ (or any other even, smooth, strictly convex capillary hypersurface) as a starting point.

Due to the monotonicity (i.e., non-decreasing behavior) of $\mathcal{A}_p^{\phi}$ under the operator $\OP$ (cf. \autoref{lem1}) and \autoref{C0 estimate-step 1}, the limit
\eq{
\lim_{i\to \infty}\mathcal{A}_p^{\phi}((\OP)^i \Sigma)
}
exists and is finite and strictly positive. Furthermore, from \eqref{key} and the inequality
\eq{
\mathcal{A}_p^{\phi}((\OP)^i \Sigma)\leq \left(\frac{V(\widehat{(\OP)^{i+1} \Sigma})}{V(\widehat{(\OP)^i \Sigma})}\right)^{n-1} \mathcal{A}_p^{\phi}((\OP)^{i+1} \Sigma),
}
we deduce that
\eq{\label{volume estimate1}
\lim_{i\to\infty} \frac{V(\widehat{(\OP)^{i+1} \Sigma})}{V(\widehat{(\OP)^i \Sigma})} = 1.
}

By \autoref{uniform C1 estimate}, passing to a subsequence $\{i_j\}$, we obtain even, smooth, strictly convex capillary hypersurfaces $\cM_1$ and $\cM_2$ such that
\eq{\label{volume estimate2}
\lim_{j\to\infty} (\OP)^{i_j} \Sigma = \cM_1, \quad
\lim_{j\to\infty} (\OP)^{i_j+1} \Sigma = \cM_2.
}
Combining \eqref{volume estimate1} and \eqref{volume estimate2}, we find
\eq{
V(\widehat{\cM}_1) = \lim_{j\to\infty} V(\widehat{(\OP)^{i_j} \Sigma}) = \lim_{j\to\infty} V(\widehat{(\OP)^{i_j+1} \Sigma}) = V(\widehat{\cM}_2).
}
Moreover, since
\eq{
V(\widehat{(\OP)^{i_j} \Sigma}, \widehat{(\OP)^{i_j+1} \Sigma}[n-1]) = V(\widehat{(\OP)^{i_j} \Sigma}), \quad
V(\widehat{(\OP)^{i_j} \Sigma}) \geq V(\widehat{(\OP)^{i_j+1} \Sigma}),
}
it follows that
\eq{
V(\widehat{\cM}_1, \widehat{\cM}_2[n-1]) = V(\widehat{\cM}_1), \quad V(\widehat{\cM}_1) \geq V(\widehat{\cM}_2).
}
By the equality case in the Minkowski inequality (cf. \cite[Thm. 1.1]{MWWX24}) and that $\cM_1,\cM_2$ are both even capillary hypersurfaces, we conclude that $\cM_1 = \cM_2=:\cM$. Hence, the curvature function of $\cM$ satisfies
\eq{
f_{\cM} = \frac{V(\widehat{\cM})}{\frac{1}{n} \int_{\cC_\theta} \phi s_{\cM}^p \, d\sigma} \, \phi s_{\cM}^{p-1}.
}

\section*{Acknowledgment}
Hu was supported by the National Key Research and Development Program of China 2021YFA1001800 and the Fundamental Research Funds for the Central Universities. Ivaki was supported by the Austrian Science Fund (FWF) under Project P36545.

\providecommand{\bysame}{\leavevmode\hbox to3em{\hrulefill}\thinspace}

	\vspace{10mm}
	\textsc{School of Mathematical Sciences, Beihang University, Beijing 100191, China,}
	\email{\href{mailto:huyingxiang@buaa.edu.cn}{huyingxiang@buaa.edu.cn}}

	\vspace{3mm}
\textsc{Institut f\"{u}r Diskrete Mathematik und Geometrie,\\ Technische Universit\"{a}t Wien, Wiedner Hauptstra{\ss}e 8-10,\\ 1040 Wien, Austria,} \email{\href{mailto:mohammad.ivaki@tuwien.ac.at}{mohammad.ivaki@tuwien.ac.at}}

\end{document}